\newtheorem{theorem}{Theorem}[section]
\newtheorem{corollary}[theorem]{Corollary}
\newtheorem{remark}[theorem]{Remark}
\theoremstyle{definition} \theoremstyle{remark}
\numberwithin{equation}{section}
\begin{document}

\title{{\bf A note on closedness of the sum of two closed subspaces in a Banach space\thanks{The work was supported by
NSFC (11461034), and the Program for Cultivating Young
Scientist of Jiangxi Province (20133BCB23009).}}}
\date{}
\author{Zhe-Ming Zheng, Hui-Sheng Ding\thanks{Corresponding author. E-mail address:
dinghs@mail.ustc.edu.cn (H.-S. Ding)} \\
{\small { \it College of Mathematics and Information Science,
Jiangxi Normal University}}\\{\small {\it Nanchang, Jiangxi 330022,
People's Republic of China }}}
\maketitle

\begin{abstract}
Let $X$ be a Banach space, and $M,N$ be two closed subspaces of $X$. We present several necessary and sufficient conditions for the closedness of $M+N$ ($M+N$ is not necessarily direct sum).

\textbf{Keywords:} Banach spaces; sum of two closed subspaces; closedness.

\textbf{2000 Mathematics Subject Classification:} 46B25.
\end{abstract}

\section{Introduction}
Let $X$ be a Banach space, and $M,N$ be two closed subspaces of $X$. Then, $M+N$ is not necessarily closed in $X$ even if $X$ is a Hilbert space and $M\cap N=\{0\}$ (see, e.g., \cite[p.145, Exercise 9]{rudin}). So, to study when $M+N$ is closed in $X$ is always an interesting problem.

For the case of $M\cap N=\{0\}$, a necessary and sufficient condition for $M+N$ being closed in $X$ is given by Kober \cite{Kober} as follows:
\begin{theorem}\label{kober}Let $X$ be a Banach space, $M,N$ be two closed subspaces of $X$ and $M\cap N=\{0\}$. Then $M+N$ is closed in $X$ if and only if
there exists a constant $A>0$ such that for all $x\in M$ and $y\in N$ we have $\|x\|\leq A\cdot \|x+y\|$.
\end{theorem}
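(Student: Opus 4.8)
The plan is to prove the two implications separately: the ``if'' direction by a direct Cauchy-sequence argument, and the ``only if'' direction via the closed graph theorem (equivalently, the bounded inverse theorem).

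For the ``if'' direction, suppose such a constant $A>0$ exists, and let $z_n=x_n+y_n$ with $x_n\in M$, $y_n\in N$, and $z_n\to z$ in $X$. Then $(z_n)$ is Cauchy, and the hypothesis gives $\|x_n-x_m\|\le A\|z_n-z_m\|$, so $(x_n)$ is Cauchy as well; since $M$ is closed, hence complete, $x_n\to x$ for some $x\in M$. Consequently $y_n=z_n-x_n\to z-x$, and closedness of $N$ forces $z-x\in N$, so $z=x+(z-x)\in M+N$. Thus $M+N$ is closed. This part is entirely routine.

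For the ``only if'' direction, assume $M+N$ is closed, so that $M+N$ is itself a Banach space under the norm inherited from $X$. Because $M\cap N=\{0\}$, every $z\in M+N$ has a unique representation $z=x+y$ with $x\in M$, $y\in N$, so the map $P:M+N\to M$ defined by $P(x+y)=x$ is a well-defined linear operator. I would check that $P$ has closed graph: if $z_n\to z$ in $M+N$ and $Pz_n\to w$, then writing $z_n=x_n+y_n$ we have $x_n=Pz_n\to w$, hence $w\in M$ since $M$ is closed, and then $y_n=z_n-x_n\to z-w$, hence $z-w\in N$ since $N$ is closed; uniqueness of the decomposition gives $Pz=w$. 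By the closed graph theorem $P$ is bounded, so $\|x\|=\|P(x+y)\|\le\|P\|\,\|x+y\|$ for all $x\in M$, $y\in N$, and $A=\max\{\|P\|,1\}$ works. (Alternatively, one may apply the bounded inverse theorem directly to the continuous linear bijection $(x,y)\mapsto x+y$ from the Banach space $M\times N$ onto $M+N$.)

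The only step that genuinely requires attention is the appeal to completeness in the second direction: the closed graph theorem (or the bounded inverse theorem) needs both the domain $M+N$ and the target space to be Banach, and this is exactly where we invoke the fact that a closed subspace of a Banach space is complete, together with the role of the hypothesis $M\cap N=\{0\}$ in making $P$ single-valued. Beyond arranging these observations in the right order, I expect no real obstacle.
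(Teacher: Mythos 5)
Your proof is correct. Note that the paper itself does not prove Theorem \ref{kober}; it is quoted from Kober's 1940 article, so there is no in-paper argument to match against. Both of your directions are sound: the ``only if'' direction via the closed graph theorem applied to the projection $P:M+N\to M$ (or, as you note parenthetically, the bounded inverse theorem applied to $(x,y)\mapsto x+y$) is the standard route and uses the hypothesis $M\cap N=\{0\}$ exactly where it is needed, namely to make $P$ single-valued; and the choice $A=\max\{\|P\|,1\}$ correctly guards against the degenerate case $P=0$. It is worth comparing your ``if'' direction with how the paper handles the analogous sufficiency statement in its general (non-direct-sum) Theorem \ref{main} and Remark \ref{remark1}: there, condition (iii) only guarantees that \emph{some} decomposition of each element has controlled $M$-component, so one cannot conclude that the $M$-components of a convergent sequence form a Cauchy sequence, and the paper instead passes to a rapidly convergent subsequence and sums an absolutely convergent series in $M$ and in $N$. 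Your simpler Cauchy-sequence argument works precisely because $M\cap N=\{0\}$ forces the decomposition of $z_n-z_m$ to be $(x_n-x_m)+(y_n-y_m)$, so the uniform bound transfers directly to differences. In short, your argument is a clean, correct proof of Kober's theorem, and the extra machinery in the paper is what one pays for dropping the directness of the sum.
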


It seems that there are seldom results concerning necessary and sufficient conditions for $M+N$ being closed in $X$ in the case of $M+N$ being not necessarily direct sum. To the best of our knowledge, the first result of a necessary and sufficient condition for $M+N$ (not necessarily direct sum) being closed in $X$ is given by Luxemburg:
\begin{theorem}{\rm\cite[Theorem 2.5]{Luxemburg}}\label{Luxemburg} Let $X$ be a Banach space, and $M,N$ be two closed subspaces of $X$. Then $M+N$ is closed in $X$ if and only if $T:M\times N\to X; (m,n)\longmapsto m+n$ is an open mapping.
\end{theorem}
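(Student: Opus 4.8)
The plan is to view $T$ as a bounded linear surjection from the Banach space $M\times N$ — equipped with the norm $\|(m,n)\|=\|m\|+\|n\|$, which is complete because $M$ and $N$ are closed subspaces of the Banach space $X$ — onto the subspace $M+N$, and then to balance the classical open mapping theorem against a completeness argument. (Here ``open mapping'' must be read as $T$ being open onto its range $M+N$ endowed with the relative topology of $X$; were $T$ open as a map into $X$, its range $M+N$ would be an open subspace of $X$, forcing $M+N=X$.)

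For the forward implication, suppose $M+N$ is closed in $X$. Then $M+N$ is itself a Banach space, so $T\colon M\times N\to M+N$ is a continuous linear surjection between Banach spaces, and the open mapping theorem immediately gives that $T$ is open.

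For the converse, suppose $T$ is open. Openness together with linearity furnishes a constant $C>0$ such that every $z\in M+N$ admits a decomposition $z=m+n$ with $m\in M$, $n\in N$ and $\|m\|+\|n\|\le C\|z\|$; extracting this quantitative splitting is the key step. I would then show $M+N$ is complete in the norm of $X$: given a Cauchy sequence in $M+N$, pass to a subsequence $(z_j)$ with $\|z_{j+1}-z_j\|<2^{-j}$, split $z_{j+1}-z_j=m_j+n_j$ with $\|m_j\|+\|n_j\|\le C2^{-j}$, and sum the absolutely convergent series $\sum m_j$ and $\sum n_j$ in the complete spaces $M$ and $N$ to obtain $m\in M$, $n\in N$; telescoping shows $z_j\to z_1+m+n\in M+N$, and a Cauchy sequence possessing a convergent subsequence converges. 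Hence $M+N$ is complete, so closed in $X$. The only delicate points are pinning down the intended meaning of ``open mapping'' and keeping the bookkeeping in this last series estimate straight; once the quantitative splitting is in hand, no real obstacle remains.
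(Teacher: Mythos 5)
Your proof is correct, and the interpretive remark at the outset is well taken: the paper's own Theorem \ref{main}(iv) indeed restates the codomain as $M+N$, confirming that ``open'' must mean open onto the range with its relative topology. Your route differs from the paper's in both directions, though. For the forward implication you apply the open mapping theorem directly to $T\colon M\times N\to M+N$, whereas the paper travels $(i)\Rightarrow(ii)\Rightarrow(iii)\Rightarrow(iv)$, applying the open mapping theorem instead to an auxiliary bijection $\phi\colon (M+N)/N\to M/(M\cap N)$ and then passing through the quantitative splitting (iii); the detour buys the paper two additional equivalent conditions ((ii) and (iii)) that your direct argument does not record, though your version is shorter if only the stated equivalence is wanted. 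For the converse, the paper's official proof of $(iv)\Rightarrow(i)$ shows that the induced map $\widetilde{T}\colon (M\times N)/\ker T\to M+N$ is a topological isomorphism and concludes that $M+N$ inherits completeness from the Banach quotient; you instead extract from openness the constant $C$ with $\|m\|+\|n\|\le C\|z\|$ (which is precisely condition (iii), up to the choice of norm on $M\times N$) and then run the absolutely convergent series argument --- this is, almost verbatim, the paper's direct proof of $(iii)\Rightarrow(i)$ given in Remark \ref{remark1}. So your converse amounts to composing $(iv)\Rightarrow(iii)$ (an implication the paper does not isolate but which follows exactly as you say, by scaling a ball contained in the image of the unit ball) with the Remark's completeness argument. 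Both approaches are sound; yours is the more elementary and self-contained, the paper's yields the richer set of equivalences and the clean structural identification of $M+N$ with $(M\times N)/\ker T$.
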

Luxemburg \cite{Luxemburg} obtain the above theorem in a more general setting. Theorem \ref{Luxemburg} is only one of the interesting results concerning this topic given by Luxemburg. We refer the reader to \cite{Luxemburg} for more details.


In addition, for the case of $X$ being a Banach lattice or a Hilbert space, there has been of great interest for some researchers to study if the sum of two closed subspaces of $X$ is still closed. We refer the reader to \cite{Luxemburg,l1,l2,voigt} and references therein for the case of $X$ being a Banach lattice or a Fr\'{e}chet space and to \cite{H1,H2} and references therein for the case of $X$ being a Hilbert space.

This short note is also devoted to this problem for the case of $X$ being a general Banach space. As one will see, we give a Kober-like theorem for the case of $M+N$ being not necessarily direct sum, and show that a necessary condition in the classical textbook \cite{rudin} is also sufficient (see Remark \ref{remark1}).

\section{Main results}

\begin{theorem}\label{main}Let $X$ be a Banach space, and $M,N$ be two closed subspaces of $X$. Then the following assertions are equivalent:
\begin{itemize}
\item[(i)] $M+N$ is closed in $X$;
\item[(ii)] $(M+N)/N$ is closed in $X/N$;
\item[(iii)] there exists a constant $K>0$ such that for every $x\in M+N$, there is a decomposition $x=m+n$ such that
$$\|m\|\leq K\cdot \|x\|,$$
where $m\in M$ and $n\in N$;
\item[(iv)] $T:M\times N\to M+N; (m,n)\longmapsto m+n$ is an open mapping.
\end{itemize}
\end{theorem}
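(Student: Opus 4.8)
The plan is to prove the cycle $(i)\Rightarrow(iv)\Rightarrow(iii)\Rightarrow(i)$ together with the separate equivalence $(i)\Leftrightarrow(ii)$, so that all four assertions become equivalent. Throughout, $M$ and $N$ are complete, being closed subspaces of the Banach space $X$; I equip $M\times N$ with the norm $\|(m,n)\|=\|m\|+\|n\|$, which makes it a Banach space, and then $T$ is linear with $\|T(m,n)\|\le\|(m,n)\|$.

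For $(i)\Leftrightarrow(ii)$ I would argue purely topologically. Let $q:X\to X/N$ be the canonical quotient map, which is continuous, linear, surjective and open, hence a topological quotient map; consequently a set $S\subseteq X/N$ is closed iff $q^{-1}(S)$ is closed in $X$. Since $\ker q=N$ we have $q^{-1}\big(q(M)\big)=M+N$, and $q(M)$ is exactly $(M+N)/N$ viewed inside $X/N$, so $(M+N)/N$ is closed in $X/N$ precisely when $M+N$ is closed in $X$. Next, for $(i)\Rightarrow(iv)$: if $M+N$ is closed then it is itself a Banach space in the norm inherited from $X$, so $T:M\times N\to M+N$ is a bounded linear surjection between Banach spaces and the open mapping theorem shows $T$ is open. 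For $(iv)\Rightarrow(iii)$: openness of $T$ forces $T(U)$ to be a neighbourhood of $0$ in $M+N$, where $U$ is the open unit ball of $M\times N$, so there is $\delta>0$ with $\{x\in M+N:\|x\|<\delta\}\subseteq T(U)$; given $0\ne x\in M+N$, writing the norm-$(\delta/2)$ vector $\tfrac{\delta}{2\|x\|}x$ as $T(m',n')$ with $\|m'\|+\|n'\|<1$ and rescaling back by the factor $\tfrac{2\|x\|}{\delta}$ produces $x=m+n$ with $\|m\|\le\|m\|+\|n\|<\tfrac{2}{\delta}\|x\|$, so $(iii)$ holds with $K=2/\delta$ (and $x=0$ is trivial).

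The heart of the matter is $(iii)\Rightarrow(i)$, which is the Kober-type step and the one needing genuine care. Let $x\in\overline{M+N}$ and pick $x_k\in M+N$ with $x_k\to x$; passing to a subsequence we may assume the consecutive differences are summable, $\sum_{k\ge1}\|x_{k+1}-x_k\|<\infty$. Applying $(iii)$ to each $x_{k+1}-x_k\in M+N$ yields $x_{k+1}-x_k=m_k+n_k$ with $m_k\in M$, $n_k\in N$ and $\|m_k\|\le K\|x_{k+1}-x_k\|$, whence also $\|n_k\|\le(1+K)\|x_{k+1}-x_k\|$; thus $\sum_k m_k$ and $\sum_k n_k$ are absolutely convergent and converge in the complete spaces $M$ and $N$ to some $m\in M$, $n\in N$. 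Fixing any decomposition $x_1=m_0+n_0$ with $m_0\in M$, $n_0\in N$ and summing the telescoping identity $x_{p+1}=x_1+\sum_{k=1}^p(x_{k+1}-x_k)$, then letting $p\to\infty$, gives $x=(m_0+m)+(n_0+n)\in M+N$, so $M+N$ is closed and the cycle closes. The only delicate points are the passage to a rapidly convergent subsequence and the absolute-convergence bookkeeping; I do not anticipate any obstacle beyond that.
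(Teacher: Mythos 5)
Your proof is correct, but it closes the loop along a different path from the paper's. The paper proves the single chain (i)$\Rightarrow$(ii)$\Rightarrow$(iii)$\Rightarrow$(iv)$\Rightarrow$(i): it obtains (iii) from (ii) via the bijection $\phi:(M+N)/N\to M/(M\cap N)$ and the open mapping theorem applied to $\phi$, and it closes the cycle by showing that (iv) makes $M+N$ topologically isomorphic to the complete space $(M\times N)/\ker T$. You instead run (i)$\Rightarrow$(iv)$\Rightarrow$(iii)$\Rightarrow$(i) and handle (i)$\Leftrightarrow$(ii) separately: you apply the open mapping theorem directly to $T$ rather than to $\phi$, extract (iii) from (iv) by a rescaling of the image of the unit ball, and close the cycle with the absolutely-convergent-series argument for (iii)$\Rightarrow$(i) --- which is precisely the ``direct proof'' the paper relegates to Remark \ref{remark1} as an aside. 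Your treatment of (i)$\Leftrightarrow$(ii) is also tidier than the paper's: since the canonical surjection $q:X\to X/N$ is an open continuous map, hence a topological quotient map, and $q^{-1}\bigl(q(M)\bigr)=M+N$, both implications fall out at once, whereas the paper only notes (i)$\Rightarrow$(ii) and recovers the converse by traversing the whole cycle. What the paper's route buys is the explicit isomorphisms $(M+N)/N\cong M/(M\cap N)$ and $M+N\cong(M\times N)/\ker T$, which are of independent interest and feed directly into Corollary \ref{c2}; what your route buys is a more elementary and self-contained argument in which completeness of $M+N$ is exhibited concretely by summable series rather than transported through a quotient. All the individual steps you give (the choice of a rapidly convergent subsequence, the bound $\|n_k\|\le(1+K)\|x_{k+1}-x_k\|$, the telescoping identity) check out.
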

\begin{proof}"(i) $\Longrightarrow$ (ii)". It is obvious.

"(ii) $\Longrightarrow$ (iii)". Define a mapping $\phi:(M+N)/N\to M/(M\cap N)$ by
$$\phi(x+N)=m+(M\cap N),$$
where $x=m+n\in M+N $, $m\in M$ and $n\in N$. It is easy to see that $\phi$ is well-defined. Moreover, $\phi$ is linear and bijective. Noting that
$$\|\phi(x+N)\|=\|m+(M\cap N)\|\geq \|m+N\|= \|x+N\|,$$
we conclude that $\phi^{-1}$ is a bounded linear operator from $M/(M\cap N)$ to $(M+N)/N$.
Since $(M+N)/N$ and $M/(M\cap N)$ are both Banach spaces, it follows from the open mapping theorem that $\phi$ is also a bounded linear operator from $(M+N)/N$ to $M/(M\cap N)$. Taking $K=\|\phi\|+1$, the assertion (iii) follows. In fact, letting $x=m'+n'\in M+N$ and $x\neq 0$, where $m'\in M$ and $n'\in N$, we have
$$\|m'+(M\cap N)\|=\|\phi(x+N)\|\leq \|\phi\|\cdot \|x+N\|\leq \|\phi\|\cdot \|x\|<K\|x\|.$$
Then, there exists $y\in M\cap N$ such that
$$\|m'+y\|<K\|x\|.$$
Letting $m=m'+y$ and $n=n'-y$, we get $x=m+n$ and $\|m\|<K\|x\|$.

"(iii) $\Longrightarrow$ (iv)". It is easy to see that $$kerT=\{(x,-x):x\in M\cap N\}.$$
Let $\pi$ be the quotient map from $M\times N$ to $(M\times N)/ kerT$, and $\widetilde{T}:(M\times N)/ kerT \to M+ N$ be defined as follows
$$\widetilde{T}[(m,n)+kerT]=m+n,\quad (m,n)\in M\times N.$$
Then $\widetilde{T}$ is linear and bijective. For every $(m,n)\in M\times N$, by (iii), there exist $m'\in M$ and $n'\in N$ such that $m+n=m'+n'$ and
$$\|m'\|\leq K\|m+n\|,$$
which yields that
$$\|m'\|+\|n'\|\leq (2K+1)\|m+n\|.$$
Then, we have
$$\|\widetilde{T}[(m,n)+kerT]\|=\|m+n\|\geq \frac{\|m'\|+\|n'\|}{2K+1}\geq \frac{1}{2K+1}\|(m,n)+kerT\|,$$
which means that $\widetilde{T}$ is an open mapping. Combing this with the fact that $\pi$ is open, we conclude that $T=\widetilde{T}\circ \pi$ is also open.

"(iv) $\Longrightarrow$ (i)". As noted in the Introduction, (i) is equivalent to (iv) has been shown by Luxemburg using a more general setting. Here, we give a different proof (maybe a more direct proof in the setting of Banach spaces).

Let $\pi,\ kerT,\ \widetilde{T}$ be as in the proof of "(iii) $\Longrightarrow$ (iv)". For every $(m,n)\in M\times N$ and $x\in M\cap N$, there holds
$$\|m+n\|\leq \|m+x\|+\|n-x\|=\|(m+x,n-x)\|=\|(m,n)+(x,-x)\|,$$
which yields $$\|\widetilde{T}[(m,n)+kerT]\|=\|m+n\|\leq \inf_{x\in M\cap N}\|(m,n)+(x,-x)\|=\|(m,n)+kerT\|,$$
i.e., $\|\widetilde{T}\|\leq 1.$ On the other hand, since $\pi:M\times N \to (M\times N)/ kerT$ is continuous and $T$ is an open mapping, for every open set $U\subset (M\times N)/ kerT$,
$$\widetilde{T}(U)=T(\pi^{-1}(U))$$
is also an open set. Thus, $\widetilde{T}$ is an open mapping, which means that $\left(\widetilde{T}\right)^{-1}$ is continuous, and so bounded. Now, we conclude that as normed linear spaces, $M+N$ and $(M\times N)/ kerT$ are topological isomorphic. Then, it follows that $(M\times N)/ kerT$ is a Banach space that $M+N$ is also a Banach space. This completes the proof.
\end{proof}

\begin{remark}\label{remark1}\rm
 In the classical textbook \cite{rudin} (see p.137, Theorem 5.20), it has been shown that (iii) is a necessary condition for (i) by using the open mapping theorem. Here, we show that (iii) is also a sufficient condition for (i). In fact, (i) is equivalent to (iii) is a Kober-like result for the case of $M+N$ being not necessarily direct sum. Moreover, we will give a direct proof of "(iii) $\Longrightarrow$ (i)" in the following. We think that it may be of interest for some readers. Here is our proof:

Let $\{x_j\}_{j=1}^{\infty}\subset M+N$ and $x_j\to x$ in $X$ as $j\to\infty$. Then, we can choose a subsequence $\{x_k\}$ of $\{x_j\}$ such that
$$\|x_{k+1}-x_k\| \leq \frac{1}{2^k\cdot K},\quad k=1,2\ldots.$$
By taking $x=x_2-x_1$ in the assertion (iii), there exist $m_1\in M$ and $n_1\in N$ such that $x_2-x_1=m_1+n_1$ and
$$\|m_1\|\leq K\cdot \|x_2-x_1\|\leq \frac{1}{2}.$$
Similarly, by taking $x=x_3-x_2$ in the assertion (iii), there exist $m_2\in M$ and $n_2\in N$ such that $x_3-x_2=m_2+n_2$ and
$$\|m_2\|\leq K\cdot \|x_3-x_2\|\leq \frac{1}{2^2}.$$
Continuing by this way, we get two sequences $\{m_k\}\subset  M$ and $\{n_k\}\subset  N$ such that
$$x_{k+1}-x_k=m_k+n_k,\quad k=1,2\ldots,$$
and $$\|m_k\|\leq  \frac{1}{2^k},\quad k=1,2\ldots.$$
Then, we have $\sum\limits_{k=1}^{\infty}\|m_k\|<\infty.$ Also, we can get $\sum\limits_{k=1}^{\infty}\|n_k\|<\infty.$ Since $M$ and $N$ are both Banach spaces, there exist $m\in M$ and $n\in N$ such that
$$m=\sum_{k=1}^{\infty}m_k,\quad n=\sum_{k=1}^{\infty}n_k.$$
Recalling that $x_k\to x$, we get
$$x-x_1=\sum_{k=1}^{\infty}(x_{k+1}-x_k)=m+n,$$
which yields that $x=x_1+m+n\in M+N$.
\end{remark}

\begin{corollary}\label{c2}Let $X$ be a Banach space, and $M,N$ be two closed subspaces of $X$. Then the following assertions are equivalent:
\begin{itemize}
\item[(a)] $M+N$ is closed in $X$;
\item[(b)] $(M+N)/(M\cap N)$ is closed in $X/(M\cap N)$.
\end{itemize}
\end{corollary}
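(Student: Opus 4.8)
The plan is to reduce everything to Theorem \ref{main}, using the third isomorphism theorem for quotients of Banach spaces together with the standard fact that a subspace $Y$ of $X$ with $Y \supseteq M\cap N$ is closed in $X$ if and only if $Y/(M\cap N)$ is closed in $X/(M\cap N)$ (this is because the quotient map $q\colon X \to X/(M\cap N)$ is continuous, open, and $Y = q^{-1}(q(Y))$). Since $M+N$ contains $M\cap N$, this dictionary applies directly: $(M+N)$ is closed in $X$ iff $(M+N)/(M\cap N)$ is closed in $X/(M\cap N)$, which is exactly the equivalence of (a) and (b).

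More concretely, I would argue as follows. For the direction (a) $\Rightarrow$ (b): if $M+N$ is closed in $X$, then since $M\cap N \subseteq M+N$, the image $q(M+N) = (M+N)/(M\cap N)$ is closed in $X/(M\cap N)$ — indeed, its preimage under $q$ is $M+N$ (using $q^{-1}(q(Y)) = Y + (M\cap N) = Y$ when $M \cap N \subseteq Y$), which is closed, and $q$ is continuous, so $q(M+N)$ is closed by the characterization of the quotient topology. For the direction (b) $\Rightarrow$ (a): if $(M+N)/(M\cap N)$ is closed in $X/(M\cap N)$, then $M+N = q^{-1}\big((M+N)/(M\cap N)\big)$ is closed in $X$ by continuity of $q$.

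Alternatively, and perhaps more in the spirit of the paper, one can deduce this from the equivalence (i) $\Leftrightarrow$ (iii) in Theorem \ref{main}: condition (iii) is visibly unchanged if one replaces $X$ by $X/(M\cap N)$, $M$ by $M/(M\cap N)$, and $N$ by $N/(M\cap N)$, since the norms of representatives in the quotient only decrease and the decomposition $x = m+n$ passes to cosets. I expect the only mild obstacle to be bookkeeping: one must note that $M/(M\cap N)$ and $N/(M\cap N)$ are genuinely closed subspaces of $X/(M\cap N)$ (again because $M$ and $N$ are closed and contain $M\cap N$), so that Theorem \ref{main} is applicable in the quotient space, and that $\big(M/(M\cap N)\big) + \big(N/(M\cap N)\big) = (M+N)/(M\cap N)$. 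Neither point is deep, so the whole corollary is essentially a formality once the quotient dictionary is in place.
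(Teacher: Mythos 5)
Your proof is correct, but it takes a genuinely different and more elementary route than the paper. The paper deduces the corollary from Theorem \ref{main} applied twice: first, using the equivalence (i) $\Leftrightarrow$ (ii) in the quotient space $X/(M\cap N)$ for the closed subspaces $M/(M\cap N)$ and $N/(M\cap N)$, it reduces (b) to the closedness of $\bigl[(M+N)/(M\cap N)\bigr]/\bigl[M/(M\cap N)\bigr]$; then it invokes the third isomorphism theorem to identify this isometrically with $(M+N)/M$ inside $X/M$, and applies (i) $\Leftrightarrow$ (ii) once more (with the roles of $M$ and $N$ swapped) to get back to (a). Your first argument bypasses Theorem \ref{main} entirely: since $q\colon X\to X/(M\cap N)$ is a continuous open surjection and $q^{-1}\bigl(q(M+N)\bigr)=(M+N)+(M\cap N)=M+N$, the closedness of $M+N$ and of its image are equivalent by the purely topological fact that for a quotient map a set is closed iff its preimage is closed. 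This is valid (the coincidence of the quotient norm topology with the quotient topology is exactly the openness of $q$, which you cite) and it exposes the corollary as independent of the main theorem; what the paper's longer route buys is that it stays entirely within the machinery of Theorem \ref{main} and showcases condition (ii) as a working tool. Your alternative sketch via condition (iii) is closer in spirit to the paper but differs in detail (the paper uses (ii), not (iii)); note that there the constant is not literally ``unchanged'' when passing to the quotient --- one must spend an $\varepsilon$ to choose a short representative, so $K$ degrades to something like $2K$ or $K+1$ --- but as you say this is only bookkeeping and does not affect the argument.
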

\begin{proof}Noting that $(M+N)/(M\cap N)=M/(M\cap N)+N/(M\cap N)$, it follows from Theorem \ref{main} that the closeness of $(M+N)/(M\cap N)$ is equivalent to the closedness of
$$[(M+N)/(M\cap N)]/[M/(M\cap N)].$$
On the other hand, it is not difficult to show that $(M+N)/M$ is isometric to $[(M+N)/(M\cap N)]/[M/(M\cap N)]$, and so their closedness are equivalent. Thus, the closedness of $(M+N)/(M\cap N)$ is equivalent to the closedness of $(M+N)/M$. Again by Theorem \ref{main}, we complete the proof.
\end{proof}

\begin{remark}\rm
By Corollary \ref{c2}, whenever we find an example of non-direct sum $M+N$, which is not closed, we can get an example of direct sum $M/(M\cap N)+N/(M\cap N)$, which is still not closed.
\end{remark}



\end{document}